\numberwithin{equation}{section}
\newtheorem{Theorem}{Theorem}[section]
\newtheorem{Lemma}[Theorem]{Lemma}
\theoremstyle{definition}
\newtheorem{Definition}[Theorem]{Definition}
\newtheorem{Remark}[Theorem]{Remark} }
\newcommand{\de}{\partial}
\newcommand{\Z}{\mathbb Z}
\newcommand{\C}{\mathbb C}
\newcommand{\al}{\alpha}
\newcommand{\be}{\beta}
\newcommand{\ga}{\gamma}
\newcommand{\la}{\lambda}
\newcommand{\si}{\sigma}
\newcommand{\ord}{\operatorname{ord}}
\begin{document}
%\allowdisplaybreaks

\newcommand{\arXivNumber}{2101.01138}

\renewcommand{\PaperNumber}{052}

\FirstPageHeading

\ShortArticleName{Centralizers of Rank One in the First Weyl Algebra}

\ArticleName{Centralizers of Rank One in the First Weyl Algebra}

\Author{Leonid MAKAR-LIMANOV~$^{\rm ab}$}

\AuthorNameForHeading{L.~Makar-Limanov}

\Address{$^{\rm a)}$~Department of Mathematics, Wayne State University, Detroit, MI 48202, USA}
\Address{$^{\rm b)}$~Department of Mathematics \& Computer Science, The Weizmann Institute of Science,\\
\hphantom{$^{\rm b)}$}~Rehovot 76100, Israel}
\EmailD{\href{mailto:lml@wayne.edu}{lml@wayne.edu}}

\ArticleDates{Received January 07, 2021, in final form May 12, 2021; Published online May 19, 2021}

\Abstract{Centralizers of rank one in the first Weyl algebra have genus zero.}

\Keywords{Weyl algebra; centralizers}

\Classification{16S32}

\section{Introduction}

Take an element $a$ of the first Weyl algebra $A_1$. {\it Rank} of the centralizer $C(a)$ of this element is
the greatest common divisor of the orders of elements in $C(a)$ (orders as differential operators).

This note contains a proof of the following

\begin{Theorem}If the centralizer $C(a)$ of an element $a \in A_1\setminus K$, where $A_1$ is the first Weyl algebra defined over a field~$K$
of characteristic zero, has rank $1$, then $C(a)$ can be embedded into a polynomial ring $K[z]$.
\end{Theorem}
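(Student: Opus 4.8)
I would begin from the classical facts (going back to Schur and Amitsur) that for $a\in A_1\setminus K$ the centralizer $C(a)$ is a commutative affine $K$\dash domain of transcendence degree one, module\dash finite over $K[a]$. The order function $b\mapsto\ord(b)$ makes $C(a)$ a filtered ring; since $\ord(bc)=\ord(b)+\ord(c)$ and any two elements of equal order have proportional leading forms, $w=-\ord$ is a discrete valuation on $\Frac(C(a))$ with residue field $K$, i.e.\ a single $K$\dash rational place ``at infinity''. The value semigroup $\Gamma=\{\ord(b):b\in C(a)\setminus\{0\}\}$ is a numerical semigroup, and rank one means exactly $\gcd(\Gamma)=1$, so $\Gamma$ is cofinite in $\N$. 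The goal then becomes: show that the smooth projective model of $C(a)$ is $\mathbb{P}^1$ (genus zero). Granting this, the functions regular away from the single place at infinity form a copy of $K[z]$, and $C(a)$ embeds into it.

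\textbf{The symbol computation (scaffolding).} Passing to the associated graded $\gr A_1=K[x,\xi]$, which is commutative, the induced bracket on leading forms is the Poisson bracket, and a direct computation gives $\{p\xi^m,q\xi^{m'}\}=(mpq'-m'p'q)\,\xi^{m+m'-1}$. Writing $\sigma(a)=\alpha(x)\xi^n$ with $n=\ord(a)$, any $b\in C(a)$ of order $m$ has symbol $p(x)\xi^m$ satisfying $\{p\xi^m,\alpha\xi^n\}=0$, i.e.\ $mp\alpha'=np'\alpha$, whence $p^n=c\,\alpha^m$. Choosing $m\in\Gamma$ coprime to $n$ (possible since $\Gamma$ is cofinite) forces $\alpha=c\,\beta^n$ for a single $\beta\in\overline{K}[x]$, so that $\operatorname{lc}(b)=c_b\,\beta^{\ord b}$ for every $b$. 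Thus, with $\eta:=\beta\xi$, the symbol map identifies $\gr C(a)$ with the semigroup ring $K[\Gamma]\subseteq K[\eta]$. This reproves the single rational place at infinity, exhibits explicit generators of $C(a)$ as lifts of the generators of $\Gamma$, and shows that the \emph{graded} object is rational.

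\textbf{The crux: genus zero.} The remaining and essential step is to promote rationality of $\gr C(a)$ to rationality of $C(a)$ itself, and the symbol computation alone is genuinely insufficient. For $\Gamma=\langle 2,3\rangle$ the relation $b_1^3-b_2^2$ is a priori only a lower\dash order element, and a deformation $b_1^3-b_2^2=b_1+c$ would make $\operatorname{Spec}C(a)$ an elliptic curve. Such deformations must be excluded precisely because $a$ lies in $A_1$: \emph{all} of its coefficients, not merely the leading ones, are polynomials (equivalently, the Fourier\dash dual operator again lies in $A_1$). Indeed the classical elliptic example $\partial^2-2\wp(x)$ passes the symbol test with $\beta=1$ yet has genus one, and it fails to lie in $A_1$ only because $\wp$ is not a polynomial. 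So I expect the heart of the argument to use full polynomiality to show that the Burchnall--Chaundy curve of two commuting operators of coprime orders is unicursal. Concretely, I would construct a common formal eigenfunction $\psi(x,z)$ for all of $C(a)$ depending on a single spectral parameter $z$, argue that rank one together with polynomial coefficients forces the eigenvalues in $b\psi=f_b(z)\psi$ to be \emph{polynomials} $f_b\in K[z]$, and note that $b\mapsto f_b$ is then a $K$\dash algebra homomorphism, injective because $b\psi=0$ implies $b=0$; this is the desired embedding $C(a)\hookrightarrow K[z]$. Showing that the spectral parameter is a single rational coordinate, rather than a point on a curve of positive genus, is the step I expect to be the main obstacle, and the one where the hypothesis $a\in A_1$ must be used decisively.
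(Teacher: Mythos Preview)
Your proposal is honest about its own gap: the entire third paragraph is a confession that the crux---upgrading rationality of $\gr C(a)$ to rationality of $C(a)$---remains undone. The Baker--Akhiezer sketch (``construct a common formal eigenfunction $\psi(x,z)$ \dots\ argue that rank one together with polynomial coefficients forces the eigenvalues to be polynomials'') is not an argument; it is a restatement of the problem. You correctly diagnose that the order filtration alone cannot distinguish $\de^2-2\wp(x)$ from $\de^2-2x^{-2}$, but you do not supply the mechanism by which polynomiality of \emph{all} coefficients enters.

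The paper's approach is entirely different and bypasses spectral curves. Rather than fixing the single filtration by $\ord$ and then struggling to lift, it exploits the two\dash parameter family of weight filtrations $w(x)=\rho$, $w(\de)=\sigma$ on $A_1$ with $\rho+\sigma\ge 0$ (so that Dixmier's lemma on leading forms still applies). The key observation is Lemma~\ref{lemma1}: if $a$ admits a weight with $\rho<0<\sigma$ for which the $w$\dash leading form of $a$ has weight \emph{zero}, then the leading\dash form map $b\mapsto\bar b$ on $C(a)$ is \emph{injective}, hence an isomorphism onto its image, which lies in $K[x^{\sigma}\de^{-\rho}]$. In other words, one does not lift from graded to filtered at all; one finds a filtration for which the passage to the graded is already injective on $C(a)$. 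Your symbol computation uses only $\rho=0$, which is precisely the degenerate case where this injectivity fails.

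The body of the paper's proof is then elementary Newton\dash polygon geometry showing that such a weight always exists. The rank\dash one hypothesis is used only to force the highest\dash $\de$ coefficient $a_0(x)$ to be an $n$th power $\alpha(x)^n$ (your own computation), and after a translation in $x$ one may take $\alpha(0)=0$; then the Newton polygon of $a$ has its top\dash left vertex at $(m',n)$ with $n\mid m'$, and the edge from $(m',n)$ to the origin supplies the required weight with $\rho=-1$, $\sigma=m'/n$. The monic case $a_0\in K$ is handled separately by automorphisms $\de\mapsto\de+p(x)$ reducing to $K[\de]$. The whole argument is a page long and never mentions the Burchnall--Chaundy curve.
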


The classical works of Burchnall and Chaundy where the systematic research
of commuting differential operators was initiated are also devoted primarily to
the case of rank $1$ but the coefficients of the operators considered by them
are analytic functions.
Burchnall and Chaundy treated only monic differential operators which
doesn't restrict generality if the coefficients are analytic functions.
Situation is completely different if the coefficients are polynomial.

\section[First Weyl algebra A1 and its skew field of fractions D1]
{First Weyl algebra $\boldsymbol{A_1}$ and its skew field of fractions $\boldsymbol{D_1}$}\label{2q0}

Before we proceed with a proof, here is a short refresher on the first Weyl algebra.

\begin{Definition}
The first Weyl algebra $A_1$ is an algebra over a field $K$
generated by two elements (denoted here by $x$ and $\de$) which satisfy a
relation $\de x - x \de = 1$.
\end{Definition}

When characteristic of $K$ is zero $A_1$ has a
natural representation over the ring of polyno\-mi\-als~$K[x]$ by operators of
multiplication by $x$ and the derivative $\de$ relative to $x$. Hence the
elements of the Weyl algebra can be thought of as differential operators
with polynomial coefficients. They can be written as ordinary polynomials
\[
a= \sum c_{i,j}x^i\de^j,\qquad c_{i,j} \in K
\]
with ordinary addition but a~more complicated
multiplication.

Algebra $A_1$ is rather small, its Gelfand--Kirillov dimension is $2$, hence it is a two-sided Ore ring. Because of that it can be embedded in a skew field $D_1$. A detailed discussion of skew fields related to Weyl algebras and their skew fields of fraction, as well as a definition of Gelfand--Kirillov dimension can be found in a paper~\cite{GK}.

If characteristic of $K$ is zero then the centralizer $C(a)$ of any
element $a \in A_1\setminus K$ is a~com\-mutative subalgebra of $A_1$ of the transcendence degree one. This
theorem which was first proved by Issai Schur in 1904 (see~\cite{S}) and by Shimshon Amitsur by purely algebraic methods (see~\cite{A}) has somewhat
entertaining history which is described in~\cite{ML}.

\begin{Definition}
The rank of a centralizer is the greatest common divisors of the orders of~elements of $C(a)$ considered as differential operators,
i.e., of degrees of elements of $C(a)$ rela\-tive~to~$\de$.
\end{Definition}

\section[Leading forms of elements of A1]{Leading forms of elements of $\boldsymbol{A_1}$}\label{3a}

Given $\rho, \sigma \in \Z$ it is possible to define a weight degree function $w$ on $A_1$ by
\begin{gather*}
w(x) = \rho,\qquad
w(\de) = \sigma, \qquad
w\big(x^i\de^j\big) = \rho i + \sigma j,
\\
w(a) = \max w\big(x^i\de^j\big)\mid c _{i,j} \neq 0\qquad \text{for}\quad a = \sum c_{i,j}x^i\de^j.
\end{gather*}

\begin{Definition}
 The leading form $\bar{a}$ of $a$ is
 \[
 \bar{a} = \sum c_{i,j}x^i\de^j\mid w\big(x^i\de^j\big) = w(a).
 \]
\end{Definition}

One of the nice properties of $A_1$ which was used by Dixmier in his seminal
research of the first Weyl algebra (see~\cite{D}, Lemma 2.7) is the following property of the leading
forms of ele\-ments~of~$A_1$:
\begin{itemize}\itemsep=0pt
\item if $\rho + \sigma > 0$ then $\overline{[a,b]} = \big\{\bar{a},\bar{b}\big\}$ for $a, b \in
A_1$, where $[a, b] = ab - ba$ and $\big\{\bar{a},\bar{b}\big\} = \bar{a}_{\de}\bar{b}_x
- \bar{a}_x\bar{b}_\de$ is the standard Poisson bracket of $\bar{a}$, $\bar{b}$
as commutative polynomials ($\bar{a}_{\de}$ etc.\ are the corresponding partial
derivatives), provided $\{\bar{a},\bar{b}\} \neq 0$;

\item if $\big\{\bar{a},\bar{b}\big\} = 0$ and $w(a) \ne 0$ then $\bar{b}$ is proportional over $K$ to a fractional power of $\bar{a}$.
\end{itemize}

The main ingredient of the considerations below is this property of the leading
forms.

To make considerations clearer the reader may use the Newton polygons of elements of~$A_1$.
The Newton polygon of $a \in A_1$ is the convex hull of those points $(i, j)$ on the plane for which~$c_{i,j} \neq 0$. The Newton polygons of elements
of $A_1$ are less sensible than the Newton polygons of~polynomials in two
variables because they depend on the way one chooses to record elements of~$A_1$
but only those edges which are independent of the choice will be used.

\section{Proof of the theorem}\label{pcp}

\subsection*{Case 1:
$\boldsymbol{\displaystyle a = \de^n + \sum\limits_{i=1}^n a_i(x)\de^{n-i}}$}

If $a \in K[\de]$ then $C(a) = K[\de]$, a ring of polynomials in one variable. Otherwise consider the leading form $\alpha$ of $a$ which contains $\de^n$ and is not a
monomial. This form has a non-zero weight and corresponding $\rho$, $\sigma$ satisfy conditions of the Dixmier's lemma (both $\rho$ and $\si$ are positive).

The leading forms of the elements from $C(a)$ are Poisson commutative with $\al$ since these elements commute with $a$. Therefore
they are proportional over $K$ to the fractional powers of $\alpha$ (as a
commutative polynomial). Because the rank of $C(a)$ is $1$ we should have $\alpha = c\big(\de + c_1x^k\big)^n$.

A mapping $\phi$ of $A_1$ to $A_1$ defined by
\[
x \rightarrow x, \qquad \de \rightarrow \de - c_1x^k
\]
is an automorphism of $A_1$. It~is easy to see that the Newton polygon of $\phi(a)$ belongs to the Newton polygon of $a$ and has a~smaller area.

Hence there exists an automorphism
\[
\psi\colon\quad x \rightarrow x, \qquad\de \rightarrow \de + p(x)
\]
such that $\psi(a) \in K[\de]$.
Therefore $C(a) = K\big[\psi^{-1}(\de)\big]$, a polynomial ring in one variable.

\subsection*{Case 2:
$\boldsymbol{\displaystyle a = x^m\de^n + \sum\limits_{i=1}^n a_i(x)\de^{n-i}, \ m > 0}$}

As above, the leading forms of elements of $C(a)$ are proportional to the
fractional powers of the leading form $\al$ of $a$ (as a commutative polynomial) as long as $\al$ is the
leading form of $a$ relative to weights $\rho$, $\sigma$ of $x$ and $\de$
provided $\rho + \sigma > 0$ and the weight of $\al$ is not zero. Because of
that and since the rank is assumed to be one, $n$ divides $m$.

The Newton polygon ${\mathcal N}(a)$ of $a$ has the vertex $(m,n)$.
If we picture ${\mathcal N}(a)$ on a plane where the $x$ axis is horizontal and the $\de$ axis is vertical then $(m,n)$ belongs to two edges: left an right.
It~is also possible that these edges coincide, or that ${\mathcal N}(a)$ consists just of the vertex $(m,n)$.

If the left edge exists, i.e., is not just the vertex $(m,n)$, then the ray with the vertex $(m,n)$ containing this edge cannot intersect the $y$ axis above the origin.
Indeed, assume that this is the case and the point of intersection is $(0, \mu)$, where $\mu$ is a positive rational number, which must be smaller than $n$.

If we take weights $\rho = \mu - n$, $\si = m$ then $\rho + \si = \mu - n + m > 0$ since $m \geq n$ and we can apply the Dixmier's lemma to the corresponding leading form of $a$.
But then
\[
\bar{a} = \big(x^d\de + cx^s\big)^n, \qquad
\text{where}\quad s = \frac{mn}{\mu - n},
\]
which is impossible since $s < 0$.

Therefore $a$ has a non-trivial leading form of zero weight relative to the weights $\rho = -1$, $\si = d$, where $d = \frac{m}{n}$. This form can be the monomial $x^m\de^n$, or a polynomial in $x^d\de$.

\begin{Lemma}\label{lemma1}
If $a$ has the leading form of weight zero relative to the weights
\[
\rho < 0 < \sigma, \qquad
\rho + \sigma \geq 0
\]
then $C(a)$ is a subring of
a ring of polynomials in one variable. $($Here the rank of $C(a)$ is not essential.$)$
\end{Lemma}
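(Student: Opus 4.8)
The plan is to pass to the associated graded algebra of $A_1$ for the weight filtration $w$ and to exploit that, under the stated signs, its weight‑zero part is a polynomial ring in one variable. Write $\rho=-p$, $\sigma=q$ with $p,q>0$, put $g=\gcd(p,q)$, $p=gp'$, $q=gq'$, and set $u=x^{q'}\de^{p'}$. The monomials $x^i\de^j$ of weight zero are exactly the powers of $u$, since $\rho i+\sigma j=0$ forces $i=q'k$, $j=p'k$; it is precisely the hypothesis $\rho<0<\sigma$ that guarantees nonnegative solutions, so that $u$ is a genuine nonconstant monomial. The condition $\rho+\sigma\ge 0$ ensures that a commutator never exceeds the weight of the corresponding product: when $\rho+\sigma>0$ the bracket strictly lowers the weight and $\gr A_1\cong K[x,\de]$ is commutative with the Poisson bracket of the first Dixmier bullet, whereas when $\rho+\sigma=0$ one has $p'=q'=1$, $u=x\de$, and $\gr A_1\cong A_1$ with weight‑zero component $K[x\de]$. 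In either case the weight‑zero component is the one–variable polynomial ring $K[u]$.

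Next I would transport the commuting relation to $\gr A_1$. For $b\in C(a)$ we have $[a,b]=0$, so the top‑weight part of the commutator vanishes; because $\rho+\sigma\ge 0$ this top part is $\{\bar a,\bar b\}$ (respectively the Weyl‑algebra bracket $[\bar a,\bar b]$ in the boundary case), whence $\bar b$ centralizes $\bar a$. Since $\bar a$ is a nonconstant weight‑zero form (as produced in Case~2), $\bar a=P(u)$ with $P$ nonconstant. The key computation is that the centralizer of $u$ in $\gr A_1$ is exactly $K[u]$: in the commutative case $\{u,f\}=x^{q'-1}\de^{p'-1}\big(p'xf_x-q'\de f_\de\big)$ vanishes iff $f$ lies in the kernel of the Euler‑type operator $f\mapsto p'xf_x-q'\de f_\de$, i.e.\ iff $f\in K[u]$; in the boundary case, decomposing $\bar b$ into $\ad(x\de)$‑eigencomponents $c_n$ gives $[P(x\de),c_n]=c_n\big(P(x\de+n)-P(x\de)\big)$, which is nonzero for $n\ne 0$ (here characteristic zero is used), forcing $c_n=0$ and $\bar b\in K[x\de]$. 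Finally, from $\{\bar a,\bar b\}=P'(u)\{u,\bar b\}$ and $P'(u)\ne 0$ in the domain $\gr A_1$, one cancels $P'(u)$ to get $\{u,\bar b\}=0$, hence $\bar b\in K[u]$.

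The payoff is that every nonzero $b\in C(a)$ has $\bar b\in K[u]$, and since $K[u]$ lies entirely in weight zero this forces $w(b)=0$ for all nonzero $b\in C(a)$. Consequently the leading‑form map $\lambda\colon C(a)\to K[u]$, $b\mapsto\bar b$, is a homomorphism of $K$‑algebras: multiplicativity holds because $\gr A_1$ is a domain and the weights add to $0+0=0$, while additivity holds because for $b_1+b_2\ne 0$ the sum again has weight zero, so the leading forms cannot cancel. Its kernel consists of the elements of $C(a)$ of strictly negative weight, which is just $\{0\}$; thus $\lambda$ is injective and embeds $C(a)$ into $K[u]\cong K[z]$. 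Note that the rank of $C(a)$ never enters, matching the parenthetical remark.

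I expect the main obstacle to be exactly the degeneracy caused by $w(a)=0$: the second bullet of Dixmier's lemma does not apply (it requires $w(a)\ne 0$), so one cannot directly conclude that $\bar b$ is a fractional power of $\bar a$. The substitute is the explicit determination of the centralizer of $u$ in $\gr A_1$, together with the separate treatment of the boundary case $\rho+\sigma=0$, where $\gr A_1$ is the noncommutative Weyl algebra rather than a commutative Poisson algebra. The remaining delicate point is the injectivity of $\lambda$, which rests on the observation that no nonzero element of $C(a)$ can acquire strictly negative weight; one should also record that $\bar a$ is taken to be nonconstant, as it is in the application.
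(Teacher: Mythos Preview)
Your proof is correct and shares the paper's architecture: first establish $w(b)=0$ for every nonzero $b\in C(a)$, then observe that the leading-form map $b\mapsto\bar b$ is an injective $K$-algebra homomorphism into $K[u]$ with $u=x^{q'}\de^{p'}$ (the paper writes this target as $K[x^{\sigma}\de^{-\rho}]$). The boundary case $\rho+\sigma=0$ is handled identically by both, via the $\ad(x\de)$-eigendecomposition and the fact that $p(t+k)=p(t)$ forces $k=0$ in characteristic zero.

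The only substantive difference is how $w(b)=0$ is obtained when $\rho+\sigma>0$. You correctly note that Dixmier's second bullet does not apply to $a$ because $w(a)=0$, and you compensate with an explicit computation of the Poisson centralizer of $u$ in $K[x,\de]$ via the Euler-type operator $f\mapsto p'xf_x-q'\de f_\de$, then reduce from $\bar a=P(u)$ to $u$ by the chain rule $\{P(u),\bar b\}=P'(u)\{u,\bar b\}$. The paper instead \emph{swaps the roles of $a$ and $b$}: from $[a,b]=0$ one has $\{\bar a,\bar b\}=0$; if $w(b)\ne 0$, the second bullet applied with $b$ in the distinguished position makes $\bar a$ proportional to a fractional power of $\bar b$, whence $w(\bar a)$ is a rational multiple of $w(\bar b)\ne 0$ unless $\bar a\in K$---contradicting the (contextual) nonconstancy of $\bar a$. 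Your route is more self-contained, trading the black-box ``fractional power'' statement for a short direct calculation; the paper's is a one-line appeal to the very bullet you set aside, just with the two elements interchanged.
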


\begin{proof}
Any $b \in C(a)$ has a non-zero leading form $\bar{b}$ of weight zero (relative to $\rho$, $\sigma$).
Indeed, if~$\rho + \sigma > 0$ and $w(b) \ne 0$ then $\big\{\bar{a}, \bar{b}\big\} \ne 0$ by the Dixmier's lemma.

If $\rho + \sigma = 0$ then $\bar{a} \in K[x\de]$ and only elements of $K[x\de]$
commute with it (see the remark below).

Hence the restriction map $b \rightarrow \bar{b}$ is an isomorphism. An algebra generated by all $\bar{b}$ is a~subalgebra of~$K[x^{\sigma}\de^{-\rho}]$ if we assume that $\rho, \sigma \in \Z$ and relatively prime.
\end{proof}

\begin{Remark}
 An equality
 \[
 x^i\de^i = (t-i+1)(t-i+2)\cdots(t-i+i),
 \]
 where $t = x\de$ is the Euler operator is easy to check since
 \[
 xt = xx\de = x(\de x - 1) = tx - x = (t-1)x
 \]
and thus
$xp(t)\de = p(t-1)t$ (see similar computations in~\cite{BC2}).
\end{Remark}

\subsection*{Case 3: $\boldsymbol{\displaystyle a = a_0(x)\de^n + \sum\limits_{i=1}^n a_i\de^{n-i}}$}

In this case $a_0 = \al^n$, $\al \in K[x]$. We may assume that $\al \not\in K$ and that $\al(0) = 0$ (applying an automorphism $x\rightarrow x + c$, $\de \rightarrow \de$ if necessary).
We may also assume that the origin is a~vertex of~${\mathcal N}(a)$ since we can replace $a$ by $a + c$, where $c$ is any element of $K$.
Then ${\mathcal N}(a)$ has the horizontal edge with the right vertex $(m,n)$ and the left vertex $(m',n)$ where
$m'$ is divisible by~$n$. As above, the edge with vertices $(m',n)$ and $(0,0)$ corresponds to the leading form of~$a$ of zero weight
and Lemma~\ref{lemma1} shows that $C(a)$ is isomorphic to a subring of $K[x^{d'}\de]$, where~$d' = \frac{m'}{n}$.

This finishes a proof of the theorem.

Since the proof of the theorem turned out to be too simple and too short we can
complement it by an attempt to describe the rank one centralizers more
precisely. In the first case it is already done, the centralizer is isomorphic
to $K[z]$, where $z = \de + p(x)$ for some $p(x) \in K[x]$.

It~would be interesting to describe $a$ for which $C(a)$ is not isomorphic to a polynomial ring. The second case described above provides us with examples of this phenomenon.

\section{Centralizers in Case 2}\label{lie}

Let us call $(m,n)$ the leading vertex of ${\mathcal N}(a)$ and the edges containing this vertex the leading edges. If the extension of the right leading edge intersects the $x$ axis in the point $(\nu, 0)$,
where $\nu > m - n$ and we take $\rho = n$, $\si = \nu - m$ then $\rho + \si = n + \nu - m > 0$ and by the Dixmier's lemma the leading form $\bar{a}$ which corresponds to this weight is $\big(x^d\de + c x^k\big)^n$, where $k \geq d$. Then, similar to the Case~1 we will make an automorphism
\[
x \to x, \qquad\de \to \de - cx^{k-d},
\]
which will collapse the right leading edge to the leading vertex.

After several steps like that we will obtain $\phi(a)$, where $\phi$ is an automorphism of $A_1$, such that the right leading edge of ${\mathcal N}(\phi(a))$ is parallel to the bisectrix of the first quadrant. For this edge $\rho + \si = 0$ and we cannot apply the Dixmier's lemma to the corresponding leading form.

Since $C(a)$ and $C(\phi(a))$ are isomorphic we will assume that the right leading edge of ${\mathcal N}(a)$ is parallel to the bisectrix.

If the left leading edge of ${\mathcal N}(a)$ is not parallel to the bisectrix we can consider the centralizer of $a$ as a subalgebra of $K\big[\de, x, x^{-1}\big]$ and proceed with automorphisms
\[
x \rightarrow x,\qquad
\de \rightarrow \de - c_1x^{k-d},
\]
where $k - d < -1$ since the Dixmier's lemma will be applicable to the corresponding leading forms.

Hence there exists an automorphism
\[
x \rightarrow x, \qquad
\de \rightarrow
\de + q(x)
\]
of $K[\de, x, x^{-1}]$
such that $\psi(a) = x^{m-n}p(t)$, where $t =
x\de$ (here $q(x)$ is a Laurent polynomial while $p(t)$ is a polynomial).

We see that centralizers of elements with the leading vertex $(m,n)$ are isomorphic to centra\-li\-zers of elements $x^{m-n}p(t)$, $p(t) \in K[t]$, $\deg_t(p(t)) = n$.
If $a = x^{m-n}p(t)$ and $m - n = 0$ then $C(a) = K[t]$. Assume now that $m > n$.

If $b \in C(a)$ then we can present $b$ as the sum of forms, homogeneous relative to the weight $w$ given by
\[
w(x) = 1,\qquad
w(\de) = -1\colon\quad b = \sum_i x^ib_i(t).
\]
Since
\[
[a,b] = \sum_i\big[a, x^ib_i(t)\big] =
\sum _i x^{m-n + i}\big(p(t-i)b_i(t) - b_i(t - m + n)p(t)\big) = 0
\]
all $x^ib_i(t) \in C(a)$. Hence $C(a)$ is a linear span of elements, homogeneous relative to the weight~$w$.

\begin{Lemma}\label{lemma2}
If $b \in C(a)$ is a $w$-homogeneous element then $b^\nu = ca^\mu$ for some relatively prime integers $\mu$ and $\nu$, and $c \in K$.
\end{Lemma}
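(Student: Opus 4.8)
The plan is to use the $\Z$-grading by $w$ on the ambient ring $K[\de,x,x^{-1}]=\bigoplus_{N\in\Z}x^N K[t]$, in which $a=x^{m-n}p(t)$ is homogeneous of weight $A:=m-n>0$ and a $w$-homogeneous $b\in C(a)$ is $b=x^{B}f(t)$ for a unique $f\in K[t]$, with $B=w(b)$. Since $C(a)$ is spanned by its $w$-homogeneous elements (established just above), it is graded, $C(a)=\bigoplus_N C(a)_N$ with $C(a)_N=C(a)\cap x^N K[t]$, and the crux of the whole proof is the single claim that $\dim_K C(a)_N\le 1$ for all $N$, with $C(a)_0=K$. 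Granting this, I pick coprime integers $\mu,\nu$ with $\nu>0$ and $\nu B=\mu A$ (namely $\nu=A/d$, $\mu=B/d$ for $d=\gcd(A,B)$). Then, for $\mu\ge0$, both $b^\nu$ and $a^\mu$ lie in $C(a)_{\nu B}=C(a)_{\mu A}$, a space of dimension at most one, so $b^\nu=ca^\mu$ with $c\in K$; as $K[\de,x,x^{-1}]$ is a domain and $a,b\ne0$, necessarily $c\ne0$. When $\mu<0$ the same conclusion is read off from $b^\nu a^{-\mu}\in C(a)_0=K$.

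To prove the claim I convert commutation with $a$ into a functional equation. By the bracket formula recorded above, $x^Ng(t)\in C(a)$ precisely when
\[
p(t-N)\,g(t)=g\big(t-(m-n)\big)\,p(t).
\]
If $x^Ng(t)$ and $x^Nh(t)$ are two nonzero members of $C(a)_N$, dividing their two copies of this identity yields $g(t)/h(t)=g\big(t-(m-n)\big)/h\big(t-(m-n)\big)$, so the rational function $g/h$ is periodic with period $m-n$. Over a field of characteristic zero no nonconstant rational function is periodic — periodicity would force infinitely many zeros or poles — hence $g/h$ is constant and the two elements are proportional, giving $\dim_K C(a)_N\le1$. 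Taking $N=0$ the equation becomes $g(t)=g\big(t-(m-n)\big)$, which forces $g\in K$, so $C(a)_0=K$.

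The one substantive step is this periodicity argument; the weight matching and the passage from proportional powers to the displayed relation are routine bookkeeping. This is also exactly where the standing hypothesis $m>n$ enters: it guarantees the shift $m-n$ is nonzero, so that a periodic ratio of polynomials is forced to be constant and each graded component of $C(a)$ collapses to dimension at most one. I therefore expect essentially all of the difficulty to be concentrated in setting up the functional equation correctly and in the elementary fact that a nonconstant rational function cannot be periodic.
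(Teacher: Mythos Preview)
Your argument is correct. The functional equation $p(t-N)g(t)=g(t-A)p(t)$ for $A=m-n$ is exactly the commutation condition displayed in the paper, and your periodicity step is clean: dividing the two copies kills $p$ and leaves $(g/h)(t)=(g/h)(t-A)$, which in characteristic zero forces $g/h\in K$. That yields $\dim_K C(a)_N\le 1$ and $C(a)_0=K$; the rest is bookkeeping with the grading, as you say. (In fact the case $\mu<0$ cannot occur for $b\ne0$: your identity $b^\nu a^{-\mu}=c\in K$ would make $a^{-\mu}$ a unit in $K[\de,x,x^{-1}]$, which it is not since $\deg_t p=n>0$. So $\mu\ge0$ automatically.)

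This is, however, a genuinely different route from the paper's. The paper never writes down the functional equation for this purpose; instead it invokes Dixmier's lemma a second time, now with the weight $w_1(x)=w_1(\de)=1$, to see that the leading vertex of $b$ must be $\lambda(m,n)$ for some rational $\lambda=\mu/\nu$, and then runs a subtraction argument: with a suitable $c$, the element $b_1=b^\nu-ca^\mu$ has strictly smaller $x$-degree but the same $w$-weight $\mu(m-n)$, so its leading vertex would again have to be $\mu(m,n)$, a contradiction unless $b_1=0$. Your approach is more elementary and self-contained---it avoids a second appeal to Dixmier's lemma and in fact proves the stronger structural statement that every $w$-graded piece of the centralizer in $K[\de,x,x^{-1}]$ is at most one-dimensional. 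The paper's approach, on the other hand, stays within the Newton-polygon/leading-form language used throughout and makes transparent why the leading vertex of any homogeneous $b$ lies on the ray through $(m,n)$, which is the geometric picture driving the rest of Section~5.
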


\begin{proof}
The leading vertex of $b$ is $\la(m,n)$, where $\la = \frac{\mu}{\nu}$, $\mu, \nu \in \Z$, $(\mu,\nu) = 1$ since we can apply the Dixmier's lemma to the leading forms of $a$ and $b$ relative to the weight $w_1(x) = 1$, $w_1(\de) = 1$. Therefore $b^\nu$ and $a^\mu$ have the same leading vertex $\mu(m,n)$ and $\deg_x(b^\nu - ca^\mu) < \deg_x(b^\nu)$ with the appropriate choice of $c \in K$.
If $b_1 = b^\nu - ca^\mu \ne 0$ then $b_1$ is a homogeneous element of $C(a)$ and its leading vertex must be proportional to the leading vertex of $a$.
This is impossible since $w(b_1) = \mu w(a) = \mu(m-n)$: if $\xi(m,n)$ is the leading vertex of $b_1$ then $w(b_1) = \xi(m-n) = \mu(m-n)$ and $\xi = \mu$.
\end{proof}

Since the rank of $C(a)$ is $1$ we can find two elements
\[
b_1 = x^{\be_1}q_1(t), \qquad
b_2 = x^{\be_2}q_2(t)\qquad
\text{such that}\quad
\deg_t(b_2) = \deg_t(b_1) + 1.
\]
Then $b = b_2b_1^{-1}$ belongs to $D_1$ (the skew field of fractions of $A_1$), and commutes with $a$. Using a relation $xt = (t-1)x$ we can write that
\[
b = x^{\be_2}q_2(t)\big(x^{\be_1}q_1(t)\big)^{-1} = x^{\be_2}q_2(t)q_1(t)^{-1}x^{-\be_1} = x^{\be_2-\be_1}r(t),
\]
where $r(t) \in K(t)$.

The leading vertex of $b$ can be defined as the difference of the leading vertices of $b_2$ and $b_1$ and is proportional to the leading vertex of $a$. Since $\deg_t(r) = \deg_t(b_2) - \deg_t(b_1) = 1$ this vertex in coordinates $x, t$ is $(d-1,1)$. (Recall that $d = \frac{m}{n}$.)

If $r$ is a polynomial then $C(a) = K\big[x^{d-1}r\big]$; if $d = 1$ then $C(a) = K[t]$; if $r$ is not a~polynomial and $d > 1$ then some powers of $x^{d-1}r$ are polynomials: say, $a = cb^n$, $c\in K$ because considerations of Lemma~\ref{lemma2} are applicable to $w$ homogeneous elements of~$D_1$ commuting with~$a$.

In the last case
$r(t) \in K(t)$ but $r(t)r(t+d - 1)\cdots r(t + (k-1)(d-1)) \in K[t]$ (observe that $tx = x(t + 1)$). We~can reduce this to $r(t)r(t+1)\cdots r(t + k -1) \in K[t]$ by rescaling $t$ and $r$.

By shifting $t$ if necessary we may assume that one of the roots of $r(t)$ is
$0$ and represent $r$ as a product
$r_0r_1$, where all roots and poles of $r_0$ are
in~$\Z$ and all roots and poles of $r_1$ are not in~$\Z$. It~is clear that
\[
r_0(t)r_0(t+1)\cdots r_0(t + k -1) \in K[t]\qquad \text{and}\qquad
r_1(t)r_1(t+1)\cdots r_1(t + k-1) \in K[t].
\]
Since $\deg(r) = 1$ and $\deg(r_i) \geq 0$ (because $k\deg(r_i)
\geq 0$), degree of one of the $r_i$ is equal to zero and
$r_i(t)r_i(t+1)\cdots r_i(t + k - 1) \in K$ for this $r_i$. But then $r_i(t) = r_i(t + k)$
which is impossible for a
non-constant rational function. Since $r_0(0) = 0$ and $r \neq 0$ we see that
$r_1$ is a constant and all roots and poles of $r$ are in $\Z$.

We can assume now that $0$ is the largest root of $r$ and write
\begin{gather*}
r = ts(t),\qquad
\text{where}\quad
s(t) = \frac{\prod_{i=1}^p (t + \la_i)}{\prod_{i=1}^p (t + \mu_i)} \in
K(t) \setminus K,
\\
\la_i \in \Z, \qquad
\mu_i \in \Z,\qquad
0 \leq \la_1 \leq \la_2 \leq \dots \leq \la_p, \qquad
\mu_1\leq \mu_2 \leq \cdots \leq \mu_p.
\end{gather*}
If $\mu_1 < 0$ then $r(t)r(t+1)\cdots r(t + k -1)$
would have a pole at $t = -\mu_1$. Hence $\mu_1 > 0$ and all poles of $s(t)$
are negative integers while all zeros of $s(t)$ are non-positive integers.

A fraction $\frac{t + \la_i}{t + \mu_i}$ can be presented as $\frac{f_i(t)}{f_i(t+1)}$ if $\la_i < \mu_i$ or as $\frac{f_i(t+1)}{f_i(t)}$ if $\la_i > \mu_i$: indeed,
\[
\frac{t + d}{t} = \frac{(t+1) (t+2) \cdots(t+d)}{t(t+1)\cdots (t+d-1)}\qquad \text{if}\quad d > 0,
\]
take the reciprocal fraction if $d < 0$. Because of that $s(t)$ can be written as
\[
\frac{s_1(t)s_2(t+1)}{s_1(t+1)s_2(t)}, \qquad
s_i(t) \in K[t].
\]
Write
$s_1(t) = s_3(t)s_4(t)$, $s_2(t) =s_4(t)s_5(t)$,
where $s_4(t)$ is the greatest common divisor of $s_1(t)$ and~$s_2(t)$. Then
\[
s(t) = \frac{s_3(t)s_4(t)s_4(t+1)s_5(t+1)}{s_3(t+1)s_4(t+1)s_4(t)s_5(t)} = \frac{s_3(t)s_5(t+1)}{s_3(t+1)s_5(t)}.
\]

All roots of $s_3(t)$ must be not positive, otherwise the largest positive root of $s_3(t)$ would be a root of $s(t)$ (which doesn't have positive roots) since this root
couldn't be canceled by a root of $s_5(t)$ or $s_3(t+1)$.

Now,
\[
q(t) = r(t)r(t+1)\cdots r(t + k - 1) = t(t+1)\cdots(t+k-1)\frac{s_3(t)s_5(t+k)}{s_3(t+k)s_5(t)}
\]
is a polynomial. If $s_3(t) \not\in K$ and its smallest root is $i$, where $i \leq 0$, then the denominator of $q(t)$ has a zero in $i-k$ which is less then $1 - k$ and cannot be canceled by a zero in the numerator since $s_3(t+k)$ and $s_5(t+k)$ are relatively prime.

Hence
\[
s_3(t) \in K, \qquad
s(t) = \frac{s_5(t+1)}{s_5(t)},\qquad \text{and}\qquad
q(t) = t(t+1)\cdots(t+k-1) \frac{s_5(t+k)}{s_5(t)}.
\]

We can uniquely write
\[
s_5(t) = \prod_{i \in I}\phi_{k, p_i}(t+i), \qquad \text{where}\quad
\phi_{k,p}(t) = \prod_{j=0}^{p} (t + jk),
\]
and all $p_i$ are maximal possible.
Then
\[
\frac{s_5(t+k)}{s_5(t)} = \prod_{i \in I} \frac{t + i + p_ik + k}{t + i}\qquad
\text{and}\qquad
t + i_1 \neq t + i_2 + p_{i_2}k + k
\]
for all $i_1, i_2 \in I$ because of the
maximality of $p_i$. Hence $I \subset \{1, \dots, k-1\}$ and each $i$ is used at most once.

As we have seen, all roots of $s_5(t)$ are of multiplicity $1$
and since
\[
(xr)^N = x^N\prod_{i=0}^{N-1}(t+i)\frac{s_5(t+N)}{s_5(t)}
\]
the elements $(xr)^N \in K[x]$ for sufficiently large $N$. Therefore the rank of $C(xr)$ is one.
In~fact, we see that if $s_5$ is any polynomial with only simple roots then the elements of $A_1$ which commute with $xt\frac{s_5(t+1)}{s_5(t)}$ form a centralizer of the rank one.

Observe that the rank is not stable under automorphisms: the rank of $C(\phi(xr))$, where $\phi(x) = x + t^M$, $\phi(t) = t$ is $M + 1$.

Let us return now to $C(a)$, where $a = x^{m-n}p(t)$, $\deg_t(p(t)) = n$. In this case
\begin{gather*}
b = x^{d-1}t\frac{s_5(t+d-1)}{s_5(t)},
\qquad
b^n = x^{m-n}t(t + d -1)\cdots(t + (n-1)(d-1))\frac{s_5(t+n(d-1))}{s_5(t)}
\end{gather*}
and all roots of $s_5(t)$ belong to $\{1 - d, 2(1 - d), \dots, (n-1)(1-d)\}$.

Additionally we can replace $t$ by $t - c$, $c \in K$.

\section{Cases 2 and 3}

We understood the structure of $C(a)$ when $a = x^{m-n}p(t)$. Are there substantially different examples of centralizers of rank one which are not isomorphic to a polynomial ring?

Consider the case of an order $2$ element commuting with an order $3$ element which was completely researched in
the work~\cite{BC1} of Burchnall and Chaundry for analytic coefficients.\footnote{To be on a more familiar ground in this section the field $K$ is the filed $\C$ of complex numbers.} They showed (and for this case it is a straightforward computation) that monic commuting operators of orders $2$ and $3$ can be reduced to
\[
A = \de^2 - 2\psi(x),\qquad
B = \de^3 - 3\psi(x)\de - \frac{3}{2}\psi'(x),
\]
where $\psi''' = 12\psi\psi'$, i.e., $\psi'' = 6\psi^2 + c_1$ and $(\psi')^2 = 4\psi^3 + c_1\psi + c_2$ (a Weierstrass function). The only rational (even algebraic) solution in this case is (up to a substitution) $\psi = x^{-2}$ when $c_1 = c_2 = 0$. (If $\psi$ is a rational function then the curve parameterized by $\psi$, $\psi'$ has genus zero, so $4\psi^3 + c_1\psi + c_2 = 4(\psi - \la)^2(\psi - \mu)$ and $\psi$ is not an algebraic function of $x$ if $\la \ne \mu$.) The corresponding operator
\[
A = x^{-2}(t-2)(t+1) = \bigg(x^{-1}\frac{t^2 - 1}{t}\bigg)^2
\]
is homogeneous.

In our case we have
\[
A = f(x)^2\de^2 + f_1(x)\de + f_2(x)
\]
since the leading form for $w(x) = 0$, $w(\de) = 1$ must be the square of a polynomial.

Here are computations for this case
\begin{gather*}
A = (f\de)^2 - ff'\de + f_1(x)\de + f_2(x)
\\ \phantom{A}
{}= \bigg[f\de + \frac{1}{2}\bigg(\frac{f_1}{f} - f'\bigg)\bigg]^2 - \frac{1}{2}\bigg(\frac{f_1}{f} - f'\bigg)'f - \frac{1}{4}\bigg(\frac{f_1}{f} - f'\bigg)^2 + f_2.
\end{gather*}
Denote $f\de + \frac{1}{2}\big(\frac{f_1}{f} - f'\big)$ by $D$. Then
\[
A = D^2 - 2\phi (x), \qquad
\text{where}\quad
\phi = \frac{1}{4}\bigg(\frac{f_1}{f} - f'\bigg)'f + \frac{1}{8}\bigg(\frac{f_1}{f} - f'\bigg)^2 - \frac{1}{2}f_2 \in \C(x).
\]

Analogously to Burchnall and Chaundry, if there is an operator of order $3$ commuting with~$A$ then it can be written as
\[
B = D^3 - 3\phi D - \frac{3}{2}\phi'f
\]
(this follows from~\cite{BC1} but will be clear from the condition $[A, B] = 0$ as well).
In order to find an equation for $\phi$ we should compute $[A, B]$. Observe that
\begin{gather*}
[D, g(x)] = g'f, \qquad
\big[D^2, g\big] = 2g'fD + (g'f)'f, \\
\big[D^3, g\big] = 3g'fD^2 + 3(g'f)'fD + ((g'f)'f)'f.
\end{gather*}
Hence
\begin{gather*}
[A,B] = -3\bigg[D^2, \phi D + \frac{1}{2}\phi'f\bigg] + 2[D^3 - 3\phi D, \phi]
\\ \hphantom{[A,B]}
{}= -3\bigg[(2\phi'fD + (\phi'f)'f)D + (\phi'f)'fD + \frac{1}{2}((\phi'f)'f)'f\bigg]
\\ \hphantom{[A,B]=}
{}+ 2[3\phi'fD^2 + 3(\phi'f)'fD + ((\phi'f)'f)'f] - 6\phi\phi'f
\\ \hphantom{[A,B]}
{}= (-6\phi'f\! + \!6\phi'f)D^2 \!+\! (-6(\phi'f)'f \!+ \!6(\phi'f)'f)D
\!-\! \frac{3}{2}((\phi'f)'f)'f\!+ \!2((\phi'f)'f)'f\! -\! 6\phi\phi'f
\\ \hphantom{[A,B]}
{}= \frac{1}{2}((\phi'f)'f)'f - 6\phi\phi'f.
\end{gather*}

Therefore
\begin{gather*}
((\phi'f)'f)' = 12\phi\phi', \qquad
(\phi'f)'f = 6\phi^2 + c_1,
\\
(\phi'f)'\phi'f = 6\phi^2\phi' + c_1\phi',\qquad
(\phi'f)^2 = 4\phi^3 + 2c_1\phi + c_2
\end{gather*}
 and we have a parameterization of an elliptic curve. Since $f, \phi \in \C(x)$ this curve must have genus $0$, i.e.,
 \[
 4\phi^3 + 2c_1\phi + c_2 = 4(\phi - \la)^2(\phi - \mu)\qquad \text{and}\qquad (\phi'f)^2 = 4(\phi - \la)^2(\phi - \mu).
 \]

Take $z = \frac{\phi'f}{2(\phi - \la)}$. Then $\phi - \mu = z^2$ and $\phi'f = 2z\big(z^2 - \delta^2\big)$, where $\delta^2 = \la - \mu$. Hence $\phi' = 2zz'$, $2zz'f = 2z\big(z^2 - \delta^2\big)$ and $z'f = z^2 - \delta^2$.

Assume that $\delta \ne 0$. Since we can re-scale $f$ and $z$ as $f \rightarrow 2\delta f$, $z \rightarrow \delta z$, let us further assume that $\delta^2 = 1$.
Then
\[
z'f = z^2 - 1\qquad\text{and}\qquad
\int \frac{{\rm d} z}{z^2 - 1} = \int \frac{{\rm d} x}{f}.
\]
Recall that $f \in \C[x]$. Since
\[
2\int\frac{{\rm d} z}{z^2 - 1} = \ln\frac{z - 1}{z + 1}
\]
all zeros of $f$ have multiplicity $1$ and
\[\int \frac{{\rm d} x}{f} = \ln\bigg(\prod_i(x - \nu_i)^{c_i}\bigg),\] where $\{\nu_i\}$ are the roots of $f$ and $c_i = (f'(\nu_i))^{-1}$. Therefore
\[
\frac{z - 1}{z + 1} = c \prod_i(x - \nu_i)^{2c_i}, \qquad\text{where}\quad
c \ne 0\quad\text{and}\quad
z = \frac{1 + c \prod_i(x - \nu_i)^{2c_i}}{1 - c \prod_i(x - \nu_i)^{2c_i}}.
\]

Now it is time to recall that
\[
\phi = \frac{1}{4}\bigg(\frac{f_1}{f} - f'\bigg)'f + \frac{1}{8}\bigg(\frac{f_1}{f} - f'\bigg)^2 - \frac{1}{2}f_2,
\]
where $f, f_1, f_2 \in \C[x]$ and thus $f^2\phi = f^2\big(z^2 + \mu\big) \in \C[x]$.
Because of that{\samepage
\[
zf = c_1\frac{1 + c \prod_i(x - \nu_i)^{2c_i}}{1 - c \prod_i(x - \nu_i)^{2c_i}}\prod_i (x - \nu_i) \in \C[x],
\]
which is possible only if the rational function $1 - c \prod_i(x - \nu_i)^{2c_i}$ doesn't have zeros.}

We can write $\prod_i(x - \nu_i)^{2c_i}$ as $\frac{\prod_j(x - \nu_j)^{2c_j}}{\prod_k(x - \nu_k)^{2c_k}}$, where $2c_j, 2c_k \in \Z^+$.
Then
\[
\prod_k(x - \nu_k)^{2c_k} - c\prod_j(x - \nu_j)^{2c_j} \in \C,
\]
which is possible only if $c = 1$.

Since
\[
z = \frac{\prod_k(x - \nu_k)^{2c_k} + \prod_j(x - \nu_j)^{2c_j}}{\prod_k(x - \nu_k)^{2c_k} - \prod_j(x - \nu_j)^{2c_j}}
\]
we see that $z \in \C[x]$. So to produce a $2,3$ commuting pair we should find a polynomial solution to $f = \frac{z^2 - 1}{z'}$.
If $f$, $z$ are given then
\[
A = (f\de + \psi)^2 - 2\big(z^2 - \mu\big), \qquad
B = (f\de + \psi)^3 - 3\big(z^2 - \mu\big)(f\de + \psi) - 3zz'f
\]
is a commuting pair for any $\psi \in \C[x]$ (indeed, $f\psi \in \C[x]$ and $\psi^2 + f\psi' \in \C[x]$, hence $\psi \in \C[x]$).
Constant $\mu = -\frac{2}{3}$ since we assumed that $\la - \mu = 1$ and $2\la + \mu = 0$ because the equation is $(\phi'f)^2 = 4\phi^3 + 2c_1\phi + c_2$.

Here is a series of examples:
\[
z = 1 + x^n,\qquad
f = \frac{x}{n}\big(2 + x^n\big),\qquad
\phi = \big(1 + x^n\big)^2 - \frac{2}{3} = x^n(2 + x^n) +\frac{1}{3},
\]
which correspond to
\[
A = \bigg[\frac{x}{n}(2 + x^n)\de + \psi\bigg]^2 - 2\bigg(x^n(2 + x^n) + \frac{1}{3}\bigg).
\]
Even the simplest one,
\[
A = [x(2 + x)\de]^2 - 2\bigg[x(2 + x) + \frac{1}{3}\bigg]
\]
cannot be made homogeneous.

It~seems that a complete classification of $(2,3)$ pairs is a daunting task. Our condition on $z$ is that $z$ assumes values $\pm 1$ when $z' = 0$. Let us call such a polynomial admissible. We can look only at reduced monic polynomials $z(x) = x^n + a_2x^{n-2} + \cdots$ because a substitution $x \rightarrow ax + b$ preserves admissibility. Also $\la^nz(\la^{-1}x)$ preserves admissibility if $\deg(z) = n$ and $\la^n = 1$.

Examples above are just one value case. Say, an admissible cubic polynomial is $x^3 - 3\cdot2^{-\frac{2}{3}}x$. If~$z = (x-\nu)^i(x+\nu)^j +1$ then it is admissible when
\[
\nu^{i+j} = (-1)^{i-1}2^{1 -i-j}\frac{(i+j)^{i+j}}{i^ij^j}.
\]
If~a~com\-position $h(g(x))$ is admissible then $g' = 0$ and $h' = 0$ should imply that $h(g(x)) = \pm1$. Hence~$h(x)$ should be an admissible function. As far as $g$ is concerned $g' = 0$ should imply that the value of $g$ belongs to the preimage of $\pm 1$ for $h$ which is less restrictive if this preimage is large.
Because of that it is hard to imagine a reasonable classification of all admissible polynomials.

On the other hand
\[
z^2 \equiv 1 \pmod {z'}\qquad\text{for}\quad
z = x^n + a_2x^{n-2} + \cdots + a_n
\]
leads to $n-1$ equations on~$n-1$ variables with apparently finite number of solutions for each~$n$. Say, for $n = 4$ all admissible polynomials are
\[
x^4 \pm 1; \qquad
x^4 + ax^2 + \frac{1}{8}a^2, \quad
a^4 = 64; \qquad
x^4 - 3a^2x^2 + 2\sqrt{2}a^3x + \frac{21}{8}a^4,\quad
337a^8 = 64.
\]

\begin{Remark}\label{corres}
The number of admissible polynomials of a given degree is finite. Indeed consider first $n-2$ homogeneous equations on the coefficients $a_2, \dots, a_n$. They are satisfied if $z^2 \equiv c \pmod{z'}$, where $c \in \C$. If one of the components of the variety defined by these equations is more than one-dimensional then (by affine dimension theorem) its intersection with the hypersurface given by the last homogeneous equation will be at least one-dimensional while condition \mbox{$z^2 \equiv 0 \pmod{z'}$} is satisfied only by $z = x^n$ (recall that we are considering only reduced monic polynomials).
\end{Remark}

If $\delta = 0$ then $(\phi'f)^2 = 4(\phi - \la)^3$ and $(\phi - \la)^{-1/2} = \pm\int \frac{{\rm d} x}{f}$ is a rational function.

\begin{Lemma}\label{div-3}%
If $f\!\in\!\C[x]$ and $\int\! \frac{{\rm d} x}{f}$ is a rational function then $f$ is a monomial, i.e., $f\! = a(x \!- b)^d$.%
\footnote{I was unable to find a published proof for this observation. This proof is a result of discussions with J.~Bernstein and A.~Volberg.}
\end{Lemma}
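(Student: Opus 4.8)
The plan is to prove the equivalent statement that $\int\frac{dx}{f}$ being rational forces $f$ to have at most one distinct root. Write $f = a\prod_{i=1}^{r}(x-\nu_i)^{m_i}$ with the $\nu_i$ distinct and $m_i \ge 1$, set $n = \deg f = \sum_i m_i$, and put $g = \int\frac{dx}{f}$, so that $g' = \frac{1}{f}$ and $g$ is a nonconstant rational function. The whole argument is a comparison of the pole orders and the order of vanishing at infinity of $g$ against the data imposed by the identity $g' = 1/f$.

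First I would pin down the local structure of $g$ at each root via partial fractions. Near $\nu_i$ one has $\frac{1}{f}\sim c_i(x-\nu_i)^{-m_i}$ with $c_i\ne 0$. If $m_i = 1$, then the antiderivative picks up a term $c_i\ln(x-\nu_i)$, which is not rational; hence every $m_i\ge 2$. For $m_i\ge 2$ the antiderivative has a pole of order exactly $m_i-1$ at $\nu_i$. Writing $g$ in lowest terms as $P/Q$, this forces the denominator to be $Q=\prod_{i=1}^{r}(x-\nu_i)^{m_i-1}$, so that $\deg Q = \sum_i(m_i-1) = n-r$.

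Next I would read off the behavior at infinity. Since $\deg f\ge 1$ we have $\frac1f\to 0$, so $g'$ has no polynomial part; therefore the polynomial part of $g$ is a constant, and after subtracting it I may assume $g\to 0$ at infinity, i.e. $\deg P<\deg Q$. In particular $P\ne 0$ because $g'=1/f\ne 0$. Then $g\sim\lambda\, x^{\deg P-\deg Q}$ with $\lambda\ne 0$, and since the exponent $\deg P-\deg Q$ is strictly negative, differentiation does not kill the leading term: $g'\sim\lambda(\deg P-\deg Q)\,x^{\deg P-\deg Q-1}$. Matching the exponent with $\frac1f\sim a^{-1}x^{-n}$ gives $\deg Q-\deg P = n-1$. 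Combined with $\deg Q = n-r$ this yields $\deg P = 1-r$, and $\deg P\ge 0$ forces $r\le 1$. Hence $f$ has a single root and is a monomial $a(x-b)^d$.

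The one point I would state with care is the matching at infinity: the conclusion $\deg Q-\deg P=n-1$ depends on the leading coefficient of $g'$ not vanishing, and this is guaranteed precisely because $\deg P<\deg Q$ strictly, so the relevant exponent is nonzero and the derivative's leading term survives. Everything else is routine partial-fraction bookkeeping; the logarithmic obstruction at a simple root and the exact pole-order count at a multiple root are the inputs that let the single degree comparison close the argument, and I do not anticipate any genuine obstacle beyond recording these local facts cleanly.
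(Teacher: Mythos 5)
Your proof is correct, but it takes a genuinely different route from the paper's. You argue locally and at infinity: the nonzero residue of $1/f$ at a simple root forces every multiplicity $m_i\geq 2$ (otherwise a logarithm appears), exact pole-order bookkeeping then pins the denominator of $g=P/Q$ down to $\prod_i(x-\nu_i)^{m_i-1}$ so that $\deg Q=n-r$, and the Laurent expansion at infinity of $g'=1/f$ gives $\deg Q-\deg P=n-1$, whence $\deg P=1-r\geq 0$ and $r\leq 1$. The paper instead stays purely algebraic: it uses the same pole-order observation only to write $g=h/f$ with $h\in\C[x]$, converts $g'=1/f$ into the polynomial identity $h'f-hf'=f$, and runs a leading-coefficient descent (replacing $h$ by $h-cf$ whenever $\deg h=\deg f$) until $\deg h=1$, at which point $c_1(f-xf')=f$ forces $f=ax^d$ by comparing coefficients. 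Your version buys a cleaner conceptual picture --- it is essentially the statement that a rational differential on $\mathbb{P}^1$ has all residues zero and divisor of degree $-2$ --- and it isolates exactly where rationality is used; the paper's version avoids residues and local expansions entirely and is a self-contained polynomial manipulation. Two small housekeeping items for your write-up: dispose of the degenerate case $\deg f=0$ at the outset, and when asserting that $Q$ is exactly $\prod_i(x-\nu_i)^{m_i-1}$ note explicitly that $g$ can have no poles away from the zeros of $f$ (since $g'=1/f$ is regular there) and that differentiation of a rational function raises the order of every pole by one --- both facts you implicitly rely on.
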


\begin{proof}
If $g' = \frac{1}{f}$ for $g \in \C(x)$ then $g = \frac{h}{f}$, $h \in \C[x]$ since the poles of $g$ are the zeros of $f$ and if the multiplicity of a zero of $f$ is $d$ then the corresponding pole of $g$ has the multiplicity $d - 1$. An equality $g' = \frac{1}{f}$ can be rewritten as $h'f - hf' = f$. If $\deg(h) > 1$ then $\deg(h'f) > \deg(f)$. Hence the leading coefficients of polynomials $h'f$ and $hf'$ are the same. This is possible only when $\deg(h) = \deg(f)$. Therefore there exists a $c \in \C$ for which $\deg(h - cf) < \deg(f)$. Since $(h - cf)'f - (h - cf)f' = f$ we can conclude that $\deg(h_1) = 1$ for $h_1 = h - cf$. Changing the variable we may assume that $h_1 = c_1x$ and then $c_1(f - xf') = f$ which is possible only if~$f = ax^d$.
\end{proof}

Hence when $\delta = 0$ we may assume that $f = x^d$. If $d = 0$ then this is the first case and $A$ is a~homogeneous operator up to an automorphism. If $d > 0$ then this is the second case and $A$ is a~homogeneous operator up to an automorphism of $\C\big[x^{-1},x, \de\big]$.

These computations show that a description of the structure of centralizers of rank one in $A_1$ is sufficiently challenging.
Can the ring of regular functions of a genus zero curve with one place at infinity be realised as a centralizer of an element of $A_1$? Here is a more approachable relevant question: is there an element of $A \in D_1 \setminus A_1$ for which $p(A) \in A_1$ for a given a polynomial $p(x) \in \C[x]$?

\section{Historical remarks}

Apparently the first work which was devoted to the research of commuting differential operators is the work of
Georg Wallenberg ``\"Uber die Vertauschbarkeit homogener linearer Differentialausdr\"ucke'' (see~\cite{W}),
in which he studied the classification problem of pairs of commuting ordinary differential operators. He didn't work with the Weyl algebra though.
Differential ope\-rators he was working with have coefficients which are ``abstract'' differentiable functions.

He mentioned that this problem did not seem to be studied before, even in the fundamental work of Gaston Floquet Sur la th\'eorie des \'equations diff\'erentielles lin\'eaires (see~\cite{F}).
He credited Floquet with the case of two operators of order one.

Wallenberg started from this point. Then he gave a complete description of commuting operators $P$ and $Q$ when $\ord P = \ord Q = 2$ and when $\ord P = 1$, $\ord Q = n$. So far everything is easy. Then he studied the case of $\ord P = 2$ and $\ord Q = 3$, and noticed that a Weierstrass elliptic function appears in the coefficients of these operators.
He dealt with a few more examples such as order 2 and 5, but did not obtain any general theorems.

Issai Schur read the paper of Wallenberg and published in 1904 paper ``\"Uber vertauschbare line\-are Differentialausdr\"ucke'', which was already mentioned. He proved that centralizers of~differential operators with differentiable coefficients are commutative by~introducing pseudo-dif\-fe\-ren\-tial operators approximately fifty years before the notion appeared under this name.

The first general results toward the classification of commuting pairs of differential operators which were reported to the London Mathematical Society on June 8, 1922, were established about 20 years later by Joseph Burchnall and Theodore Chaundy (see~\cite{BC1}).

Burchnall and Chaundy published two more papers~\cite{BC2} and~\cite{BC3} devoted to this topic. Curiously enough they didn't know about the Wallenberg's paper and rediscovered his classification of~$2$,~$3$ commuting pairs and the fact that the ring of operators commuting with this pair of operators is isomorphic to a ring of regular functions of an elliptic curve.

Arguably the most important fact obtained by them is that an algebraic curve can be related to a pair of commuting operators.

After these works the question about commuting pairs of operators didn't attract much attention until the work of Jacques Dixmier~\cite{D}
which appeared in 1968. He found elements in $A_1$ with centralizers which are also isomorphic to the ring of regular functions of an elliptic curve. Unlike examples by Wallenberg and Burchnall and Chaundy, where it was a question of~rather straightforward computations, Dixmier's example required ingenuity.

After another gap of approximately ten years the question on pairs of commuting operators was raised in the context of solutions of some important partial differential equations.
It~seems that Igor Krichever was the first to write on this topic in~\cite{K1} which appeared in 1976. He also rediscovered that an algebraic curve can be associated to a pair of commuting operators (and attributes to A.~Shabat first observation of this kind) and mentions that operators commuting with an operator commute with each other.

Then Vladimir Drinfeld in~\cite{Dr} gave algebro-geometric interpretation of Krichever's results. This approach was further elucidated in a report of David Mumford~\cite{Mum} (D.~Kajdan mentioned at the end
of the report is David Kazhdan).

Later Krichever wrote an important survey~\cite{K2} devoted to application of algebraic geometry to solutions of nonlinear PDE.

These works were primarily concerned with centralizers of rank one.

In~\cite{K3} Krichever considered centralizers of an arbitrary rank and proved that for any centralizer $A$ of a (non-constant) differential operator there exists a marked algebraic curve $(\ga, P)$
such that $A$ is isomorphic to a ring of meromorphic functions on $\ga$ with poles in $P$. He remarked that $\ga$ is non-singular for a ``general position'' centralizer.

Motohico Mulase in~\cite{Mul} generalized the results of Krichever in~\cite{K1}, Drinfeld, and Mumford to the case of arbitrary rank. His main theorem is similar to the theorem of Krichever cited above.
Apparently he didn't know about~\cite{K3}.

It~remains to mention the works devoted to the centralizers of elements of the first Weyl algebra or its skew field of fractions. They primarily provide constructions of examples of~centralizers which correspond to the curves of high genus.

Here is a partial list:~\cite{DM, De, Gri, Gru, Mi6, Mi5, Mi4, Mi3, Mi2, Mi1, MZ, Mo1, Mo3, Mo2, Mo4, O1, O2, O4, O3, P2, ZM, ZMS}.

Interested reader will also benefit by looking at lectures by Emma Previato~\cite{P1} and at the paper~\cite{W3} by George Wilson as well as papers~\cite{SW, V, W2}, and~\cite{BZ}.

Lastly, it seems that the definition of rank of a centralizer as the greatest common divisor of the orders of its elements first appeared in a work of Wilson~\cite{W1}. A similar definition of rank formulated slightly differently can be found in the work~\cite{Dr} of Drinfeld.

\subsection*{Acknowledgements}

The author is grateful to the Max-Planck-Institut f\"{u}r Mathemtik in Bonn where he worked on~this project in July--August of 2019.
He was also supported by a FAPESP grant awarded by~the State of Sao Paulo, Brazil.
The author also greatly benefited from remarks by the referees.

\pdfbookmark[1]{References}{ref}
\LastPageEnding


\begin{thebibliography}{99}
\footnotesize\itemsep=0pt

\bibitem{A}
Amitsur S.A., Commutative linear differential operators, \href{https://doi.org/10.2140/pjm.1958.8.1}{\textit{Pacific~J.
 Math.}} \textbf{8} (1958), 1--10.

\bibitem{BZ}
Burban I., Zheglov A., Fourier--{M}ukai transform on {W}eierstrass cubics and
 commuting differential operators, \href{https://doi.org/10.1142/S0129167X18500647}{\textit{Internat.~J. Math.}} \textbf{29}
 (2018), 1850064, 46~pages, \href{https://arxiv.org/abs/1602.08694}{arXiv:1602.08694}.

\bibitem{BC1}
Burchnall J.L., Chaundy T.W., Commutative ordinary differential operators,
 \href{https://doi.org/10.1112/plms/s2-21.1.420}{\textit{Proc. London Math. Soc.}} \textbf{21} (1923), 420--440.

\bibitem{BC2}
Burchnall J.L., Chaundy T.W., Commutative ordinary differential operators, \href{https://doi.org/10.1098/rspa.1928.0069}{\textit{Proc. Roy. Soc. London~A}}
 \textbf{118} (1928), 557--583.

\bibitem{BC3}
Burchnall J.L., Chaundy T.W., Commutative ordinary differential operators~{II}.
 The identity {$P^n = Q^m$}, \href{https://doi.org/10.1098/rspa.1931.0208}{\textit{Proc. Roy. Soc. London~A}} \textbf{134}
 (1931), 471--485.

\bibitem{DM}
Davletshina V.N., Mironov A.E., On commuting ordinary differential operators
 with polynomial coefficients corresponding to spectral curves of genus two,
 \href{https://doi.org/10.4134/BKMS.b160685}{\textit{Bull. Korean Math. Soc.}} \textbf{54} (2017), 1669--1675,
 \href{https://arxiv.org/abs/1606.01346}{arXiv:1606.01346}.

\bibitem{De}
Dehornoy P., Op\'erateurs diff\'erentiels et courbes elliptiques,
 \textit{Compositio Math.} \textbf{43} (1981), 71--99.

\bibitem{D}
Dixmier J., Sur les alg\`ebres de {W}eyl, \href{https://doi.org/10.24033/bsmf.1667}{\textit{Bull. Soc. Math. France}}
 \textbf{96} (1968), 209--242.

\bibitem{Dr}
Drinfel'd V.G., Commutative subrings of certain noncommutative rings,
 \href{http://dx.doi.org/10.1007/BF01135527}{\textit{Funct. Anal. Appl.}} \textbf{11} (1977), 9--12.

\bibitem{F}
Floquet G., Sur la th\'eorie des \'equations diff\'erentielles lin\'eaires,
 \href{https://doi.org/10.24033/asens.182}{\textit{Ann. Sci. \'Ecole Norm. Sup.~(2)}} \textbf{8} (1879), 3--132.

\bibitem{GK}
Gelfand I.M., Kirillov A.A., Sur les corps li\'es aux alg\`ebres enveloppantes
 des alg\`ebres de {L}ie, \href{https://doi.org/10.1007/BF02684800}{\textit{Inst. Hautes \'Etudes Sci. Publ. Math.}}
 (1966), 5--19.

\bibitem{Gri}
Grinevich P.G., Rational solutions for the equation of commutation of
 differential operators, \href{https://doi.org/10.1007/BF01081803}{\textit{Funct. Anal. Appl.}} \textbf{16} (1982),
 15--19.

\bibitem{Gru}
Gr\"unbaum F.A., Commuting pairs of linear ordinary differential operators of
 orders four and six, \href{https://doi.org/10.1016/0167-2789(88)90007-3}{\textit{Phys.~D}} \textbf{31} (1988), 424--433.

\bibitem{K1}
Krichever I.M., An algebraic-geometric construction of the Zakharov--Shabat
 equations and their periodic solution, \textit{Sov. Math. Dokl.} \textbf{17}
 (1976), 394--397.

\bibitem{K2}
Krichever I.M., Methods of algebraic geometry in the theory of non-linear
 equations, \href{https://doi.org/10.1070/RM1977v032n06ABEH003862}{\textit{Russian Math. Surveys}} \textbf{32} (1977), no.~6,
 185--213.

\bibitem{K3}
Krichever I.M., Commutative rings of ordinary linear differential operators,
 \href{https://doi.org/10.1007/BF01681429}{\textit{Funct. Anal. Appl.}} \textbf{12} (1978), 175--185.

\bibitem{ML}
Makar-Limanov L., Centralizers in the quantum plane algebra, in Studies in
 {L}ie Theory, \textit{Progr. Math.}, Vol. 243, \href{https://doi.org/10.1007/0-8176-4478-4_16}{Birkh\"auser Boston}, Boston,
 MA, 2006, 411--416.

\bibitem{Mi6}
Mironov A.E., A~ring of commuting differential operators of rank~2
 corresponding to a~curve of genus~2, \href{https://doi.org/10.1070/SM2004v195n05ABEH000823}{\textit{Sb. Math.}} \textbf{195} (2004),
 103--114.

\bibitem{Mi5}
Mironov A.E., Commuting rank~2 differential operators corresponding to a curve
 of genus~2, \href{https://doi.org/10.1007/s10688-005-0045-1}{\textit{Funct. Anal. Appl.}} \textbf{39} (2005), 240--243.

\bibitem{Mi4}
Mironov A.E., On commuting differential operators of rank~2, \textit{Sib.
 Electr. Math. Rep.} \textbf{6} (2009), 533--536.

\bibitem{Mi3}
Mironov A.E., Commuting higher rank ordinary differential operators, in
 European {C}ongress of {M}athematics, \href{https://doi.org/10.4171/120-1/27}{Eur. Math. Soc.}, Z\"urich, 2013,
 459--473, \href{https://arxiv.org/abs/1204.2092}{arXiv:1204.2092}.

\bibitem{Mi2}
Mironov A.E., Self-adjoint commuting ordinary differential operators,
 \href{https://doi.org/10.1007/s00222-013-0486-8}{\textit{Invent. Math.}} \textbf{197} (2014), 417--431.

\bibitem{Mi1}
Mironov A.E., Self-adjoint commuting differential operators of rank two,
 \href{http://dx.doi.org/10.1070/RM9730}{\textit{Russian Math. Surveys}} \textbf{71} (2016), 751--779.

\bibitem{MZ}
Mironov A.E., Zheglov A.B., Commuting ordinary differential operators with
 polynomial coefficients and automorphisms of the first {W}eyl algebra,
 \href{https://doi.org/10.1093/imrn/rnv218}{\textit{Int. Math. Res. Not.}} \textbf{2016} (2016), 2974--2993,
 \href{https://arxiv.org/abs/1503.00485}{arXiv:1503.00485}.

\bibitem{Mo1}
Mokhov O.I., Commuting ordinary differential operators of rank~3 corresponding
 to an elliptic curve, \href{https://doi.org/10.1070/RM1982v037n04ABEH003953}{\textit{Russian Math. Surveys}} \textbf{37} (1982),
 no.~4, 129--130.

\bibitem{Mo3}
Mokhov O.I., Commuting differential operators of rank~3 and nonlinear
 equations, \href{https://doi.org/10.1070/IM1990v035n03ABEH000720}{\textit{Math. USSR-Izv.}} \textbf{53} (1989), 629--655.

\bibitem{Mo2}
Mokhov O.I., On commutative subalgebras of the {W}eyl algebra related to
 commuting operators of arbitrary rank and genus, \href{https://doi.org/10.1134/S0001434613070298}{\textit{Math. Notes}}
 \textbf{94} (2013), 298--300, \href{https://arxiv.org/abs/1201.5979}{arXiv:1201.5979}.

\bibitem{Mo4}
Mokhov O.I., Commuting ordinary differential operators of arbitrary genus and
 arbitrary rank with polynomial coefficients, in Topology, Geometry,
 Integrable Systems, and Mathematical Physics, \textit{Amer. Math. Soc.
 Transl. Ser.~2}, Vol.~234, \href{https://doi.org/10.1090/trans2/234/16}{Amer. Math. Soc.}, Providence, RI, 2014, 323--336,
 \href{https://arxiv.org/abs/1303.4263}{arXiv:1303.4263}.

\bibitem{Mul}
Mulase M., Category of vector bundles on algebraic curves and
 infinite-dimensional {G}rassmannians, \href{https://doi.org/10.1142/S0129167X90000174}{\textit{Internat.~J. Math.}} \textbf{1}
 (1990), 293--342.

\bibitem{Mum}
Mumford D., An algebro-geometric construction of commuting operators and of
 solutions to the {T}oda lattice equation, {K}orteweg--de {V}ries equation and
 related nonlinear equation, in Proceedings of the {I}nternational {S}ymposium
 on {A}lgebraic {G}eometry ({K}yoto {U}niv., {K}yoto, 1977), Kinokuniya Book
 Store, Tokyo, 1978, 115--153.

\bibitem{O1}
Oganesyan V.S., Commuting differential operators of rank~2 and arbitrary
 genus~$g$ with polynomial coefficients, \href{https://doi.org/10.1070/RM2015v070n01ABEH004939}{\textit{Russian Math. Surveys}}
 \textbf{70} (2015), 165--167.

\bibitem{O2}
Oganesyan V.S., Commuting differential operators of rank~2 with polynomial
 coefficients, \href{https://doi.org/10.1007/s10688-016-0128-1}{\textit{Funct. Anal. Appl.}} \textbf{50} (2016), 54--61,
 \href{https://arxiv.org/abs/1409.4058}{arXiv:1409.4058}.

\bibitem{O4}
Oganesyan V.S., An alternative proof of {M}ironov's results on self-adjoint
 commuting operators of rank~2, \href{https://doi.org/10.1134/S0037446618010111}{\textit{Sib. Math.~J.}} \textbf{59} (2018),
 102--106.

\bibitem{O3}
Oganesyan V.S., Commuting differential operators of rank~2 with rational
 coefficients, \href{https://doi.org/10.1007/s10688-018-0229-0}{\textit{Funct. Anal. Appl.}} \textbf{52} (2018), 203--213,
 \href{https://arxiv.org/abs/1608.05146}{arXiv:1608.05146}.

\bibitem{P1}
Previato E., Seventy years of spectral curves: 1923--1993, in Integrable
 Systems and Quantum Groups ({M}ontecatini {T}erme, 1993), \textit{Lecture
 Notes in Math.}, Vol.~1620, \href{https://doi.org/10.1007/BFb0094795}{Springer}, Berlin, 1996, 419--481.

\bibitem{P2}
Previato E., Rueda S.L., Zurro M.A., Commuting ordinary differential operators
 and the {D}ixmier test, \href{https://doi.org/10.3842/SIGMA.2019.101}{\textit{SIGMA}} \textbf{15} (2019), 101, 23~pages,
 \href{https://arxiv.org/abs/1902.01361}{arXiv:1902.01361}.

\bibitem{S}
Schur I., \"Uber vertauschbare lineare Differentialausdr\"ucke,
 \textit{Sitzungsber. Berl. Math. Ges.} (1905), 2--8.

\bibitem{SW}
Segal G., Wilson G., Loop groups and equations of {K}d{V} type, \href{https://doi.org/10.1007/BF02698802}{\textit{Inst.
 Hautes \'Etudes Sci. Publ. Math.}} (1985), 5--65.

\bibitem{V}
Verdier J.-L., \'Equations diff\'erentielles alg\'ebriques, in S\'eminaire
 {B}ourbaki, 30e ann\'ee (1977/78), \textit{Lecture Notes in Math.}, Vol.~710,
 \href{https://doi.org/10.1007/BFb0069975}{Springer}, Berlin, 1979, Exp. No.~512, 101--122.

\bibitem{W}
Wallenberg G., \"Uber die Vertauschbarkeit homogener linearer
 Differentialausdr\"ucke, \textit{Arch. Math. Phys.} \textbf{3} (1903),
 252–268.

\bibitem{W1}
Wilson G., Algebraic curves and soliton equations, in Geometry Today ({R}ome,
 1984), \textit{Progr. Math.}, Vol.~60, Birkh\"auser Boston, Boston, MA, 1985,
 303--329.

\bibitem{W2}
Wilson G., Bispectral commutative ordinary differential operators,
 \href{https://doi.org/10.1515/crll.1993.442.177}{\textit{J.~Reine Angew. Math.}} \textbf{442} (1993), 177--204.

\bibitem{W3}
Wilson G., Collisions of {C}alogero--{M}oser particles and an adelic
 {G}rassmannian (with an appendix by I.G.~Macdonald), \href{https://doi.org/10.1007/s002220050237}{\textit{Invent. Math.}}
 \textbf{133} (1998), 1--41.

\bibitem{ZM}
Zheglov A.B., Mironov A.E., On commuting differential operators with polynomial
 coefficients corresponding to spectral curves of genus one, \href{https://doi.org/10.1134/s1064562415030126}{\textit{Dokl.
 Math.}} \textbf{91} (2015), 281--282.

\bibitem{ZMS}
Zheglov A.B., Mironov A.E., Saparbaeva B.T., Commuting {K}richever--{N}ovikov
 differential operators with polynomial coefficients, \href{https://doi.org/10.1134/s0037446616050104}{\textit{Sib. Math.~J.}}
 \textbf{57} (2016), 819--823.

\end{thebibliography}
\end{document}